\theoremstyle{plain}
\newtheorem{Thm}{Theorem}
\newtheorem{Cor}[Thm]{Corollary}
\newtheorem{Lem}[Thm]{Lemma}
\newtheorem{KLem}[Thm]{Key Lemma}
\newtheorem{Prob}[Thm]{Problem}
\theoremstyle{definition}
\newtheorem{Facts}[Thm]{Facts}
\newtheorem{Def}[Thm]{Definition}
\newcommand{\bi}{\boldsymbol{i}}
\renewcommand{\phi}{\varphi}
\renewcommand{\rho}{\varrho}
\newcommand{\Cen}{\mathrm{Cen}}
\newcommand{\fm}{\mathfrak  m}
\newcommand{\fh}{\mathfrak  h}
\newcommand{\RR}{\mathbb{R}}
\newcommand{\CC}{\mathbb{C}}
\newcommand{\id}{\mathrm{id}}
\newcommand{\SKIP}[1]{}
\newcommand{\fg}{\mathfrak{g}}
\renewcommand{\emptyset}{\varnothing}
\begin{document}

\title{{\bf A note on commutators in compact semisimple Lie algebras}}
%\\\ \\
\author{Linus Kramer\thanks{Funded by the Deutsche Forschungsgemeinschaft through a Polish-German
\emph{Beethoven} grant KR1668/11, and under
Germany's Excellence Strategy EXC 2044-390685587, Mathematics M\"unster: Dynamics-Geometry-Structure.}}
\date{Dedicated to Jacques Tits}
\maketitle

\begin{abstract}
 Given any two elements $A,B$ in a compact semisimple Lie algebra, we show that there exist
 elements $X,Y,Z$ such that 
 \[
  A=[X,Y]\text{ and }B=[X,Z].
 \]
The proof uses Cartan subalgebras and their root systems. We also review some related
problems about Cartan subalgebras in compact semisimple Lie algebras.
\end{abstract}

Got\^o's Commutator Theorem \cite{Goto} \cite[6.56]{Compact} states that in a compact connected semisimple Lie group $G$, every element is a commutator. 
There is an infinitesimal version of Got\^o's Theorem which says that every element in a compact semisimple
Lie algebra $\fg$ is
a commutator, cp.~\cite[Thm.~A3.2]{ProLie}. The proof given in \emph{loc.cit}, which
uses Kostant's Convexity Theorem, is attributed to K.-H. Neeb.
Other proofs were given later by D'Andrea--Maffei and Malkoun--Nahlus \cite{AM,MNArx,MNLie}.
We prove the following somewhat stronger result by elementary means.
\begin{Thm}\label{Th1}
Let $\fg$ be a semisimple compact Lie algebra and let $A,B\in\fg$. Then
there is a regular element $X\in\fg$ with 
\[
 A,B\in[X,\fg]=\mathrm{ad}(X)(\fg).
\]
\end{Thm}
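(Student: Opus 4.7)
The plan is to reformulate the conclusion geometrically and then attack it by minimizing a natural non-negative function on the compact group $G$. Equip $\fg$ with the $\Ad$-invariant positive-definite inner product $\langle\cdot,\cdot\rangle:=-\kappa$, where $\kappa$ is the Killing form. For a regular element $X\in\fg$, the kernel of $\mathrm{ad}(X)$ is the unique Cartan subalgebra $\fh_X$ containing $X$, and since $\mathrm{ad}(X)$ is skew-symmetric we have $[X,\fg]=(\ker\mathrm{ad}\,X)^{\perp}=\fh_X^{\perp}$. Theorem~\ref{Th1} is therefore equivalent to the assertion that for every pair $A,B\in\fg$ there is a Cartan subalgebra $\fh\subset\fg$ with $A,B\perp\fh$; any regular element of $\fh$ then serves as $X$. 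Fix a reference Cartan $\fh_0$, let $G$ be the compact connected Lie group with $\Lie(G)=\fg$, and let $\pi:\fg\to\fh_0$ denote orthogonal projection. Every Cartan is of the form $\Ad(g)\fh_0$, so the task reduces to finding $g\in G$ with $\pi\Ad(g^{-1})A=0=\pi\Ad(g^{-1})B$.

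I would then consider the smooth non-negative function
\[
f:G\to[0,\infty),\qquad f(g):=\|\pi\Ad(g^{-1})A\|^{2}+\|\pi\Ad(g^{-1})B\|^{2},
\]
which attains its minimum by compactness, and show that this minimum equals $0$. Setting $A':=\Ad(g^{-1})A$, $B':=\Ad(g^{-1})B$, $a:=\pi A'$, $b:=\pi B'$, the $\Ad$-invariance of $\langle\cdot,\cdot\rangle$ shows that at a critical point $g$ one has the equation $[a,A']+[b,B']=0$. In the real root decomposition $\fg=\fh_0\oplus\bigoplus_\alpha V_\alpha$ (with $\dim V_\alpha=2$ and $\mathrm{ad}(H)|_{V_\alpha}=\alpha(H)J_\alpha$ for a fixed complex structure $J_\alpha$), this decouples to the linear relation $\alpha(a)A'_\alpha+\alpha(b)B'_\alpha=0$ in each $V_\alpha$.

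The crux of the argument is to rule out critical points with $(a,b)\neq(0,0)$. If such a critical point $g$ existed, semisimplicity of $\fg$ would give a root $\beta$ with $(\beta(a),\beta(b))\neq(0,0)$. Using $[V_\beta,V_\beta]\subset\RR H_\beta$ and $[V_\alpha,V_\beta]\subset V_{\alpha+\beta}\oplus V_{\alpha-\beta}$ for $\alpha\neq\pm\beta$, a direct computation shows that in the simply-laced case the second variation of $f$ along a vector $Z\in V_\beta$ equals
\[
\tfrac{1}{2}\tfrac{d^{2}}{dt^{2}}\Big|_{t=0}f(ge^{tZ})=\|[Z,A'_\beta]\|^{2}+\|[Z,B'_\beta]\|^{2}-(\beta(a)^{2}+\beta(b)^{2})\|Z\|^{2}.
\]
The bracket $V_\beta\times V_\beta\to\RR H_\beta$ is antisymmetric, so it vanishes on the line $\RR A'_\beta$; and the critical equation forces $A'_\beta$ and $B'_\beta$ to be parallel in $V_\beta$, hence they share this kernel. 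Picking $0\neq Z$ on this common line kills both positive terms and leaves the strictly negative expression $-(\beta(a)^{2}+\beta(b)^{2})\|Z\|^{2}$, contradicting the assumed minimality of $g$.

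The main technical obstacle I anticipate is the analogous computation in non-simply-laced types, where $\pm 2\beta$ may also be a root and additional quadratic cross-terms $\beta(a)\langle J_\beta Z,[Z,A'_{2\beta}]\rangle$ (and similarly for $B$) enter the second variation; these must be controlled by a judicious choice of $Z\in V_\beta$. I expect the dominant Cartan contribution $-(\beta(a)^{2}+\beta(b)^{2})\|Z\|^{2}$ to prevail after careful bookkeeping, but this is where the detailed proof requires the most work.
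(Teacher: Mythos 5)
Your argument is correct, and it reaches the goal by a genuinely different route from the paper, although both live in the same ``Jacobi method'' family. You minimize the single functional $f(g)=\|\pi\Ad(g^{-1})A\|^{2}+\|\pi\Ad(g^{-1})B\|^{2}$ over all of $G$ and extract a contradiction from the first- and second-order conditions at a nonzero minimum. The paper instead proves an asymmetric lemma (Lemma~\ref{L2}): among the CSAs already orthogonal to $A$, minimize only the length of the projection of $B$, and at a putative nonzero minimum produce an \emph{explicit better competitor} by rotating $H_\gamma$ inside the $\fso(3)$-subalgebra $\RR H_\gamma\oplus L_\gamma$ onto a line orthogonal to both $H_\gamma$ and $A_\gamma$; applying the lemma twice (first to the pair $(0,A)$, then to $(A,B)$) gives Corollary~\ref{C1}. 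What the paper's version buys is that no derivatives are taken at all --- the contradiction is pure $\fso(3)$ geometry --- at the cost of a two-stage argument. Your version handles $A$ and $B$ in one stroke but requires the second-variation computation, which you have carried out correctly: the critical equation $\beta(a)A'_\beta+\beta(b)B'_\beta=0$ does force $A'_\beta,B'_\beta$ onto a common line, and a nonzero $Z$ on that line kills both bracket terms and leaves $-(\beta(a)^2+\beta(b)^2)\|Z\|^2<0$.

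The obstacle you anticipate at the end is not actually there, though your instinct to worry about it is reasonable. The relevant dichotomy is not simply-laced versus non-simply-laced but reduced versus non-reduced, and the root system of a (compact) semisimple Lie algebra with respect to a Cartan subalgebra is always reduced: if $\beta$ is a root then $2\beta$ is not (non-reduced systems of type $BC_n$ arise only as \emph{restricted} root systems of symmetric spaces). Consequently $[V_\beta,V_\beta]\subseteq\RR H_\beta$ in every type --- this is exactly the content of the paper's Lemma~\ref{so3}, which identifies $\RR H_\beta\oplus L_\beta\cong\fso(3)$ --- and the only root space $V_\alpha$ with $[V_\beta,V_\alpha]\cap(\fh_0\oplus V_\beta)\neq 0$ is $\alpha=\beta$ itself together with the Cartan part. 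Your ``simply-laced'' formula for the second variation is therefore the general formula, the cross-terms involving $A'_{2\beta}$ never occur, and your proof is complete as written once you delete the final caveat.
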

Our Lemma~\ref{L2}, which is the main step of the proof, uses a variant of Jacobi's method,
cp.~\cite{KHH}, \cite[App.~B]{MNArx}\footnote{Note that \cite{MNArx} and \cite{MNLie}
differ considerably.} and \cite{Wild}.
In the course of the proof we show in Corollary~\ref{C1} that every linear subspace $W\subseteq\fg$ of codimension
at most $2$ contains a Cartan subalgebra.
\begin{Def}
A finite dimensional real semisimple Lie algebra $\fg$ is called \emph{compact} if 
its Killing form $\langle-,-\rangle$ is negative definite. In this case
its adjoint group
\[
 G=\langle\exp(\mathrm{ad}(X))\mid X\in\fg\rangle
\]
is compact and
\[|X|=\sqrt{-\langle X,X\rangle}\] is a $G$-invariant euclidean norm on $\fg$.
In what follows,
orthogonality in $\fg$ will always refer to the Killing form.
The \emph{centralizer} of $A\in\fg$ is the Lie subalgebra \[\Cen_\fg(A)=\{X\in\fg\mid [X,A]=0\}.\]
\end{Def}
\begin{Lem}\label{L1}
Let $\fg$ be a compact semisimple Lie algebra and let $A\in\fg$. Then
$\fg$ decomposes (as a $\Cen_\fg(A)$-module) orthogonally as
\[
 \fg=\Cen_\fg(A)\oplus[A,\fg].
\]
\end{Lem}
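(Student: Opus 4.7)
The plan is to exploit that $\ad(A)$ is skew-symmetric with respect to the Killing form, which follows directly from its invariance: for all $X,Y\in\fg$,
\[
\langle[A,X],Y\rangle=-\langle X,[A,Y]\rangle.
\]
Since $-\langle-,-\rangle$ is a Euclidean inner product on $\fg$, this says $\ad(A)$ is skew-adjoint, and then the standard linear-algebra fact that kernel and image of a skew-adjoint (indeed of any adjointable) operator on a Euclidean space are orthogonal complements will give exactly the required decomposition.

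Concretely, I would first note the tautological identifications $\Cen_\fg(A)=\kernel(\ad A)$ and $[A,\fg]=\mathrm{image}(\ad A)$. Then I would compute, for $X\in\fg$, that $X\perp[A,\fg]$ iff $\langle X,[A,Y]\rangle=0$ for all $Y\in\fg$, iff $\langle[A,X],Y\rangle=0$ for all $Y\in\fg$ by the displayed skew-symmetry, iff $[A,X]=0$ by non-degeneracy of the Killing form, iff $X\in\Cen_\fg(A)$. Thus $[A,\fg]^\perp=\Cen_\fg(A)$, and since $\langle-,-\rangle$ is negative definite (hence non-degenerate on every subspace) one obtains the orthogonal direct sum $\fg=\Cen_\fg(A)\oplus[A,\fg]$.

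Finally I would check the $\Cen_\fg(A)$-module structure. The centralizer is a Lie subalgebra, so it is closed under its own adjoint action. For the complementary summand, Jacobi gives, for $Z\in\Cen_\fg(A)$ and $Y\in\fg$,
\[
[Z,[A,Y]]=[[Z,A],Y]+[A,[Z,Y]]=[A,[Z,Y]]\in[A,\fg],
\]
so $[A,\fg]$ is stable under $\ad(\Cen_\fg(A))$ as well.

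I do not anticipate a real obstacle: the whole argument reduces to skew-adjointness of $\ad(A)$ plus non-degeneracy of the Killing form. The only thing requiring minor care is keeping track of signs in the invariance identity and making sure that negative definiteness is used exactly where non-degeneracy of $\langle-,-\rangle$ restricted to subspaces is invoked.
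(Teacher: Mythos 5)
Your proposal is correct and follows essentially the same route as the paper: both arguments use invariance of the Killing form to identify $[A,\fg]^\perp$ with $\Cen_\fg(A)$, use definiteness (non-degeneracy) to conclude the orthogonal decomposition, and invoke the Jacobi identity for the module structure. Your explicit framing via skew-adjointness of $\mathrm{ad}(A)$ is just a slightly more systematic packaging of the identical computation.
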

\begin{proof}
Let $X,Y\in\fg$. If $X$ centralizes $A$, then 
\[
 \langle X,[A,Y]\rangle=\langle [X,A],Y\rangle=0,
\]
whence $X\in[A,\fg]^\perp$. 
Conversely, if $X\in[A,\fg]^\perp$, then 
\[
 0=\langle X,[A,Y]\rangle=\langle[X,A],Y\rangle
\]
holds for all $Y$ and thus $[X,A]=0$.
The Jacobi identity shows that $[X,[A,\fg]]\subseteq[A,\fg]$ for $X\in\Cen_\fg(A)$.
\end{proof}
We recall some facts about the structure of compact semisimple Lie algebras,
which can be found in \cite{Adams,BtD,He,HiNe,Compact}.
\begin{Facts}
Let $\fg$ be a compact semisimple Lie algebra.
We call a maximal abelian subalgebra $\fh$ of $\fg$ a \emph{Cartan subalgebra} or
a CSA for short. All CSAs in $\fg$ are conjugate under the action of $G$, cp.~\cite[V.6.4]{He} or \cite[6.27]{Compact}.
The dimension of $\fh$ is called the \emph{rank} of $\fg$.
Let $\fh\subseteq\fg$ be a CSA. Then 
\[
 T=\{\exp(\mathrm{ad}(H))\mid H\in\fh\}
\]
is a \emph{maximal torus} in $G$. As a $T$-module, the 
Lie algebra $\fg$ decomposes as an orthogonal direct sum of irreducible $T$-modules
\[
 \fg=\fh\oplus\sum_{\alpha\in\Phi^+}L_\alpha,
\]
cp.~\cite[Ch.~6]{Compact}.
The \emph{positive real roots} $\alpha\in\Phi^+$ are certain nonzero linear forms $\alpha:\fh\longrightarrow\RR$.
Each $T$-module $L_\alpha$ is $2$-dimensional and carries a complex structure $\bi$
such that $L_\alpha\cong\CC$ and 
\[
 \exp(\mathrm{ad}(H))(X)=\exp(2\pi\bi\alpha(H))X
\]
holds for all $H\in\fh$, $\alpha\in\Phi^+$ and $X\in L_\alpha$.
Hence $H\in\fh$ acts on $L_\alpha$ as 
\[
 \mathrm{ad}(H)(X)=[H,X]=2\pi\bi\alpha(H)X.
\]
The positive real roots separate the points in $\fh$, i.e. $\bigcap\{\mathrm{ker}(\alpha)\mid\alpha\in\Phi^+\}=\{0\}$.
The centralizer of an element $H\in\fh$ is therefore
\[
 \Cen_\fg(H)=\fh\oplus\sum_{\alpha(H)=0}L_\alpha.
\]
Hence $\Cen_\fg(H)=\fh$ holds if and only if $\alpha(H)\neq0$ for all 
positive real roots $\alpha$. Such elements $H$ are called \emph{regular}.
\end{Facts}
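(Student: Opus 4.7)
The plan is to reduce the theorem to Corollary~\ref{C1}: every subspace of $\fg$ of codimension at most $2$ contains a CSA. Granting this for the moment, the theorem follows quickly. Set $W=A^\perp\cap B^\perp$, a subspace of codimension at most $2$, and pick any CSA $\fh\subseteq W$. The singular set of $\fh$ is the finite union $\bigcup_{\alpha\in\Phi^+}\ker\alpha$ of proper hyperplanes, so the regular elements of $\fh$ are dense; choose any regular $X\in\fh$. Then $\Cen_\fg(X)=\fh$, and Lemma~\ref{L1} gives $[X,\fg]=\fh^\perp\supseteq\{A,B\}$, as desired.

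To establish Corollary~\ref{C1}, I would use the variational/Jacobi-style argument foreshadowed by Lemma~\ref{L2}. Fix a reference CSA $\fh_0$ and consider the continuous function
\[
 f:G\longrightarrow\RR,\qquad g\longmapsto \bigl|\proj_{g\fh_0}(A)\bigr|^2+\bigl|\proj_{g\fh_0}(B)\bigr|^2,
\]
where $\proj_{g\fh_0}$ denotes Killing-orthogonal projection onto $g\fh_0$. The compact group $G$ is the domain, so $f$ attains its minimum, say at $\fh:=g_0\fh_0$; the aim is to prove that this minimum equals $0$. Writing $A=A_0+A_1$ and $B=B_0+B_1$ with $A_0,B_0\in\fh$ and $A_1,B_1\in\fh^\perp=[\fh,\fg]$ (by Lemma~\ref{L1}), differentiating $f(\exp(t\,\mathrm{ad}(Y))g_0)$ at $t=0$ and invoking ad-invariance of the Killing form yields the critical-point condition
\[
 [A_0,A_1]+[B_0,B_1]=0,
\]
which decomposes root-by-root as $\alpha(A_0)X^A_\alpha+\alpha(B_0)X^B_\alpha=0$ for every $\alpha\in\Phi^+$, where $A_1=\sum_\alpha X^A_\alpha$ and $B_1=\sum_\alpha X^B_\alpha$ in the root-space decomposition of $\fh^\perp$.

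The main obstacle — the actual content of Lemma~\ref{L2} — is to show that $(A_0,B_0)\neq(0,0)$ is incompatible with minimality. If, say, $A_0\neq0$, then since positive roots separate points of $\fh$ some $\alpha\in\Phi^+$ satisfies $\alpha(A_0)\neq 0$; the plan is to exhibit a direction $Y\in L_\alpha$ — a Jacobi-type rotation in that root plane — for which the second-order variation of $f$ along $\exp(t\,\mathrm{ad}(Y))\fh$ is strictly negative, contradicting minimality. The codimension-$2$ flavour is what makes this step delicate: an $L_\alpha$-rotation tuned to shrink $|A_0|^2$ generically enlarges $|B_0|^2$, so the first-order identity $\alpha(A_0)X^A_\alpha=-\alpha(B_0)X^B_\alpha$ must be fed into a careful second-variation calculation to produce a simultaneously decreasing direction. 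Once this elementary-but-careful variational computation is in hand, Corollary~\ref{C1} follows, and Theorem~\ref{Th1} drops out of the reduction above.
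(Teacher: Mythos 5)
Your proposal does not address the statement it is supposed to prove. The statement in question is the block of \emph{Facts}: existence and conjugacy of Cartan subalgebras, the maximal torus $T$, the orthogonal decomposition $\fg=\fh\oplus\sum_{\alpha\in\Phi^+}L_\alpha$ into irreducible $T$-modules carrying complex structures, the action formula $[H,X]=2\pi\bi\alpha(H)X$, the separation of points of $\fh$ by the positive real roots, and the centralizer formula $\Cen_\fg(H)=\fh\oplus\sum_{\alpha(H)=0}L_\alpha$. The paper gives no proof of these; it recalls them from the cited references (Adams, Br\"ocker--tom Dieck, Hilgert--Neeb, Hofmann--Morris) as standard structure theory of compact groups. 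What you have written instead is a proof sketch of Theorem~\ref{Th1}, routed through Corollary~\ref{C1} and a variational argument in the spirit of Lemma~\ref{L2}. As a proof of the Facts it is circular: every step of your argument --- the root-space decomposition of $\fh^\perp$, the fact that the roots separate points of $\fh$, the density of regular elements, the identity $\Cen_\fg(X)=\fh$ for regular $X$ --- presupposes exactly the statements to be established, and nothing in your text derives any of them (conjugacy of CSAs, $\dim L_\alpha=2$, the complex structure on $L_\alpha$, the centralizer description) from the compactness of $G$ or from first principles.

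Separately, even read as a proof of Theorem~\ref{Th1}, your argument has a real gap at its center: the second-variation step is announced as a ``plan'' but never carried out, and you yourself observe the difficulty, namely that a rotation shrinking $|A_0|^2$ may enlarge $|B_0|^2$. The paper sidesteps this entirely by not minimizing the two projections simultaneously. Its Lemma~\ref{L2} is asymmetric: assuming $A$ is already orthogonal to some CSA, one minimizes only $|B_0|$ over the CSAs orthogonal to $A$, and at the minimum one applies an explicit rotation $g=\exp(\mathrm{ad}(Z))$ with $Z\in\fm_\gamma=\RR H_\gamma\oplus L_\gamma\cong\fso(3)$, chosen so that the new CSA $g(\fh)=V\oplus\kernel(\gamma)$ remains orthogonal to $A$ (because $V$ is taken orthogonal to $A_\gamma$ as well as to $H_\gamma$, and $g$ fixes $\kernel(\gamma)$ pointwise) while the projection of $B$ becomes $B_{00}$, of strictly smaller length --- a contradiction requiring no second-order computation. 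Corollary~\ref{C1} then follows by applying Lemma~\ref{L2} twice, first to the pair $(0,A)$ and then to $(A,B)$. If you wish to salvage your own route, you must either carry out the two-function second-variation estimate honestly or restructure the minimization sequentially as the paper does; and in any case a proof of the Facts themselves would be a different task altogether.
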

\begin{Lem}\label{so3}
Let $\fg$ be a compact semisimple Lie algebra, with a CSA $\fh$ and the corresponding decomposition
\[
 \fg=\fh\oplus\sum_{\alpha\in\Phi^+}L_\alpha
\]
as above, and let $\gamma\in\Phi^+$ be a positive real root.
Let $H_\gamma\in\fh$ be a nonzero vector orthogonal to $\mathrm{ker}(\gamma)$.
Then \[\fm_\gamma=\RR H_\gamma\oplus L_\gamma\cong\mathfrak{so}(3)\]
is the Lie algebra generated by $L_\gamma$.
\end{Lem}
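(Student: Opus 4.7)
The plan is to fix a nonzero $X\in L_\gamma$, set $Y=\bi X$ so that $(X,Y)$ is an $\RR$-basis of $L_\gamma$, and determine $[X,Y]$ precisely; once we know $[L_\gamma,L_\gamma]=\RR H_\gamma$, all three assertions in the lemma (subalgebra, $\mathfrak{so}(3)$, generation by $L_\gamma$) will follow with little extra work.

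To prove closure $[L_\gamma,L_\gamma]\subseteq\RR H_\gamma$, I would first apply the Jacobi identity to $H\in\fh$ and $X,Y$, using the relations $[H,X]=2\pi\gamma(H)Y$ and $[H,Y]=-2\pi\gamma(H)X$, to obtain
\[
 [H,[X,Y]] = [2\pi\gamma(H)Y,Y]+[X,-2\pi\gamma(H)X] = 0.
\]
Thus $[X,Y]$ centralizes $\fh$, and since $\fh$ is maximal abelian this forces $[X,Y]\in\fh$. To pin $[X,Y]$ down inside $\fh$ I would invoke invariance of the Killing form: for $K\in\kernel(\gamma)$ the relation $[K,Y]=-2\pi\gamma(K)X=0$ gives
\[
 \langle[X,Y],K\rangle = \langle X,[Y,K]\rangle = 0,
\]
so $[X,Y]\perp\kernel(\gamma)$ and therefore $[X,Y]\in\RR H_\gamma$.

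The same invariance trick, with $H_\gamma$ in place of $K$, produces
\[
 \langle[X,Y],H_\gamma\rangle = 2\pi\gamma(H_\gamma)\langle X,X\rangle \neq 0,
\]
since $\gamma(H_\gamma)\neq 0$ and the Killing form is negative definite. Hence $[X,Y]$ is a nonzero multiple of $H_\gamma$, so $\fm_\gamma$ is three-dimensional and non-abelian, and $L_\gamma$ alone already generates $\fm_\gamma$. Writing $[X,Y]=cH_\gamma$ and $\lambda=2\pi\gamma(H_\gamma)$, the identity above yields $c\lambda=\lambda^{2}\langle X,X\rangle/\langle H_\gamma,H_\gamma\rangle>0$. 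A short direct computation of the intrinsic Killing form of $\fm_\gamma$ in the basis $(H_\gamma,X,Y)$ then shows that it is negative definite, so $\fm_\gamma$ is compact semisimple of dimension three, hence isomorphic to $\mathfrak{so}(3)$.

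The main obstacle is this last identification: closure and non-triviality alone only pin $\fm_\gamma$ down to $\mathfrak{so}(3)$ or $\mathfrak{sl}_2(\RR)$, and which of the two occurs is purely a sign question. The inequality $c\lambda>0$, forced by the negative definiteness of the ambient Killing form, is exactly what rules out the split form. The Jacobi calculation itself is routine but depends on remembering that $\bi$ is an $\RR$-linear complex structure defined only on $L_\gamma$, not a global operator on $\fg$.
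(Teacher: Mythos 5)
Your proposal is correct and follows essentially the same route as the paper: both proofs hinge on computing that $[X,\bi X]$ is a nonzero multiple of $H_\gamma$ whose sign is forced by the negative definiteness of the Killing form. The only differences are cosmetic --- the paper puts $[X,\bi X]$ into $\fh$ via orthogonality to $L_\gamma$ rather than your Jacobi-identity argument, and it rescales to exhibit the $\fso(3)$ structure constants $[U,V]=W$, $[V,W]=U$, $[W,U]=V$ explicitly instead of invoking, as you do, the negative definiteness of the intrinsic Killing form of $\fm_\gamma$ together with the classification of three-dimensional compact semisimple Lie algebras.
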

\begin{proof}
We let $\fm_\gamma$ denote the Lie algebra generated by $L_\gamma$.
The centralizer of $\mathrm{ker}(\gamma)$ is $\fh\oplus L_\gamma$, whence $\fm_\gamma\subseteq\fh\oplus L_\gamma$.
Let $X\in L_\gamma$ be an element of norm $|X|=1$. Then $X,\bi X$ is an orthonormal basis for $L_\gamma$, and we put 
$Y=[X,\bi X]$. Then \[\langle X,Y\rangle=\langle[X,X],\bi X\rangle=0=\langle X,[\bi X,\bi X]\rangle=\langle Y,\bi X\rangle\] and thus 
$Y\in\fh$. For $H\in\fh$ we have 
\[
 \langle H,Y\rangle=\langle [H,X],\bi X\rangle=2\pi\gamma(H)\langle\bi X,\bi X\rangle=-2\pi\gamma(H),
\]
hence $Y$ is nonzero and orthogonal to $\mathrm{ker}(\gamma)$.
Moreover, $\langle Y,Y\rangle=-2\pi\gamma(Y)<0$. 
If we put $\rho=\frac{1}{\sqrt{2\pi\gamma(Y)}}$ and $U=\rho X,V=\rho \bi X,W=\rho^2Y$,
then \[ [U,V]=W,\ [V,W]=U,\ [W,U]=V\] and thus $\fm_\gamma\cong\mathfrak{so}(3)$.
\end{proof}

\begin{KLem}\label{L2}
Let $\fg$ be a compact semisimple Lie algebra and let 
$A,B\in\fg$. Suppose that $A$ is orthogonal to some CSA. 
Then there exists a CSA $\fh\subseteq\fg$ which
is orthogonal both to $A$ and to $B$.
\end{KLem}
\begin{proof}
Among all CSAs $\fh$ orthogonal to $A$, we choose one for which the orthogonal
projection $B_0$ of $B$ to $\fh$ has minimal length $r=|B_0|$. We claim that $r=0$.
Assume towards a contradiction that this is false. We decompose $\fg$ orthogonally
as 
\[
 \fg=\fh\oplus\sum_{\alpha\in\Phi^+}L_\alpha.
\]
Accordingly we have $A=\sum_{\alpha}A_\alpha$ and $B=B_0+\sum_\alpha B_\alpha$,
with $A_\alpha,B_\alpha\in L_\alpha$.
By assumption, $B_0\neq0$. Hence there is a positive real root $\gamma\in\Phi^+$ with $\gamma(B_0)\neq0$.
We decompose $B_0$ further in $\fh$ as an orthogonal sum $B_0=B_{00}+H_\gamma$,
where $\gamma(B_{00})=0$ and $H_\gamma\neq0$.
Then 
\[\fh=\RR H_\gamma\oplus\mathrm{ker}(\gamma)\] and 
\[\fm_\gamma=\RR H_\gamma\oplus L_\gamma\cong\mathfrak{so}(3)\]
by Lemma~\ref{so3}.
In the $3$-dimensional Lie algebra $\fm_\gamma\cong\mathfrak{so}(3)$ there is a 1-dimensional subspace $V\subseteq\fm_\gamma$ which 
is orthogonal to $H_\gamma$ and to $A_\gamma$. 
The adjoint representation of $\mathrm{SO(3)}$ on its Lie algebra
$\mathfrak{so}(3)$ is transitive on the 1-dimensional subspaces.
Hence there is an element $g\in G$ 
of the form $g=\exp(\mathrm{ad}(Z))$, for some $Z\in\fm_\gamma$,
with $g(H_\gamma)\in V$. 
Moreover, $g$ fixes $\mathrm{ker}(\gamma)$ pointwise.
The CSA $\fh'=g(\fh)=V\oplus\mathrm{ker}(\gamma)$ is then orthogonal to $A$.
The projection of $B$ to $\fh'$ is $B_{00}$ and has therefore strictly smaller length than $B_0$.
This is a contradiction.
\end{proof}
\begin{Cor}\label{C1}
Let $\fg$ be a compact semisimple Lie algebra and let $A,B\in\fg$. Then $A^\perp\cap B^\perp$
contains a CSA $\fh$.
\end{Cor}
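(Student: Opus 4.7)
The corollary is a direct consequence of Lemma~\ref{L2}, applied twice in succession. The key observation is that the hypothesis ``$A$ is orthogonal to some CSA'' needed in Lemma~\ref{L2} is trivially satisfied whenever $A=0$, since every subspace of $\fg$ is orthogonal to $0$. This lets us bootstrap from the trivial case.

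First, I would apply Lemma~\ref{L2} to the pair $(0,A)$. The hypothesis holds vacuously: any CSA of $\fg$ is orthogonal to the zero vector. The lemma therefore yields a CSA $\fh_0 \subseteq \fg$ orthogonal to both $0$ and $A$; in particular $\fh_0 \subseteq A^\perp$. This establishes that $A$ itself is orthogonal to some CSA.

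Having secured the hypothesis for the pair $(A,B)$, I would invoke Lemma~\ref{L2} a second time, now with $A$ and $B$ in their original roles. The lemma delivers a CSA $\fh \subseteq \fg$ orthogonal to both $A$ and $B$, i.e.\ $\fh \subseteq A^\perp \cap B^\perp$, which is the claim. There is no real obstacle to overcome here, as all the work has been done in Lemma~\ref{L2}; the only insight required is the two-step bootstrap via the zero element.
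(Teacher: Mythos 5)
Your proof is correct and follows exactly the paper's own argument: apply Lemma~\ref{L2} first to the pair $(0,A)$, noting that the hypothesis holds trivially for $0$, and then to $(A,B)$. Nothing to add.
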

\begin{proof}
We apply Lemma~\ref{L2} to $0$ and $A$ to obtain a CSA which is orthogonal to
$A$. Another application of Lemma~\ref{L2} to $A,B$ then yields a CSA $\fh$ which is orthogonal
to both $A$ and $B$. 
\end{proof}
\begin{proof}[Proof of Theorem~\ref{Th1}]
Let $\fh$ be a CSA which is orthogonal to $A$ and to $B$ and
let $X\in\fh$ be a regular element. Then $\fh=\Cen_\fg(X)$
and thus $A,B\in[X,\fg]$ by Lemma~\ref{L1}.
\end{proof}

\section*{Some remarks and open problems.}

We close with some remarks and an open problem. 
Suppose that $\fh$ is a CSA in the compact Lie algebra $\fg$.
If we pick nonzero elements $Z_\alpha\in L_\alpha$, for every positive root
$\alpha$, and if we put $Z=\sum_{\alpha\in\Phi^+}Z_\alpha$,
then $\fh\cap \Cen_\fg(Z)=0$. Since $\Cen_\fg(Z)$ contains a CSA $\fh'$, this
shows that there exists a CSA $\fh'$ which intersects $\fh$ trivially.
However, one can do better. The following is shown in \cite{MNLie}.
\begin{Thm}[Malkoun-Nahlus]\label{MN}
Let $\fh$ be a CSA in a compact semisimple Lie algebra $\fg$. Then there exists a CSA
$\fh'\subseteq\fh^\perp$.
\end{Thm}
We reproduce the beautiful proof from \cite{MNLie}.
\begin{proof}
We may assume that $\fg\neq0$.
Let $w$ be a Coxeter element in the Weyl group $W=N/T$, where $T$ is the maximal
torus corresponding to $\fh$, and $N\subseteq G$ is the normalizer of $T$.
Then $W$ acts as a finite reflection group on $\fh$, and 
$1$ is not an eigenvalue of $w$ in this action, cp.~\cite[3.16]{Hum}.
We choose $X\in\fg$ with $w=\exp(\mathrm{ad}(X))T$ and we claim that every
CSA $\fh'$ containing $X$ is orthogonal to $\fh$. The linear endomorphism
$\exp(\mathrm{ad}(X))-\id_\fg$ of $\fg$ maps $\fh$ onto $\fh$, and 
\[\exp(\mathrm{ad}(X))-\id_\fg=\sum_{k=1}^\infty\frac{1}{k!}\mathrm{ad}(X)^k
=\mathrm{ad}(X)\sum_{k=1}^\infty\frac{1}{k!}\mathrm{ad}(X)^{k-1}.\] 
In particular, $\mathrm{ad}(X)(\fg)\supseteq\fh$.
Thus $\Cen_\fg(X)\subseteq\fh^\perp$ by Lemma~\ref{L1}.
\end{proof}
Christoph B\"ohm has explained to me the following remarkable result.
\begin{Thm}
The orthogonal Lie algebras $\mathfrak{so}(m)$, for $m\geq 3$, can be decomposed
as orthogonal direct sum of CSAs.
\end{Thm}
\begin{proof}
The rank of $\mathfrak{so}(m)$ is $r=\lfloor \frac{m}{2}\rfloor$, and the dimension of $\mathfrak{so}(m)$
is $n=\frac{m(m-1)}{2}$. 
We let $e_1,\ldots,e_m$ denote the standard basis of $\RR^m$, and we put $X_{i,j}=e_ie_j^T-e_je_i^T$.
Then the $X_{i,j}$ with $i<j$ form an orthonormal basis of $\mathfrak{so}(m)$.
Moreover, two distinct basis elements $X_{i,j},X_{k,\ell}$ commute if and only if $\{i,j\}\cap\{k,\ell\}=\emptyset$.
The standard CSA for $\mathfrak{so}(m)$ is spanned by $X_{1,2},X_{3,4},\ldots,X_{2r-1,2r}$.
The claim follows if we can partition the set $\mathcal T_m$ of all two-element subsets of $\{1,\ldots,m\}$
into $\frac{n}{r}$ subsets consisting of $r$ pairwise disjoint $2$-element subsets. The latter is possible
by the scheduling algorithm for round robin tournaments. 

An explicit construction of such a partition of $\mathcal T_m$ can be described as follows, cp. \cite[Ex.~36.2]{vLW}.
For odd $m\geq 3$ put \[M_k=\{\{i,j\}\mid i<j\text{ and }i+j\equiv 2k\pmod m\},\]
for $k=1,\ldots,m$. The $M_k$ partition $\mathcal T_m$ into $m$ subsets of cardinality $\frac{m-1}{2}$,
each consisting of pairwise disjoint $2$-element subsets.
From this we obtain also such a partition of $\mathcal T_{m+1}$ by putting $M_k'=M_k\cup\{\{k,m+1\}\}$.
\end{proof}
We cannot expect such a result for general compact semisimple Lie algebras.
For example, the compact semisimple Lie algebra $\fg=\mathfrak{so}(5)\oplus\mathfrak{so}(3)$ has dimension
$13$, hence such a decomposition cannot exist. The following question is thus very natural.
\begin{Prob}
Which compact semisimple Lie algebras $\fg$ can be decomposed as an orthogonal sum of CSAs?
\end{Prob}
The monograph \cite{KT} is devoted to the complex version of this problem.

For the Lie algebras $\mathfrak{su}(m)$, the problem can be rephrased as follows, using the Veronese embedding
of $\CC\mathrm{P}^{m-1}$.
To each unit vector $u\in\CC^m$, we may assign the selfadjoint projector \[P(u)=uu^*,\]
where $*$ denotes the conjugate-transpose, and its traceless part \[\textstyle P_0(u)=uu^*-\frac{1}{m}\id_{\CC^m}.\]
We note that $P(uz)=P(u)$ holds for all complex numbers $z$ with $|z|=1$.
Suppose that $u_1,\ldots,u_m$ is an orthonormal basis of $\CC^m$. Then 
the projectors $P(u_1),\ldots,P(u_m)$ commute, and the matrices 
$\bi P_0(u_1),\ldots,\bi P_0(u_m)$ span a CSA $\fh$ in $\mathfrak{su}(m)$.
Conversely, the CSA $\fh$ determines the set of subspaces $u_1\CC,\ldots,u_m\CC$
uniquely, since these are the fixed points of the maximal torus $T\subseteq\mathrm{PSU}(m)$ 
with Lie algebra $\fh$ in its action 
on complex projective space $\CC\mathrm{P}^{m-1}$. 
Hence $\fh$ determines the orthonormal basis $u_1,\ldots,u_m$ up to a permutation of vectors,
and up to multiplication of the basis vectors by complex numbers of norm $1$.

The Killing form for $\mathfrak{su}(m)$ is given by 
$\langle X,Y\rangle=2m\mathrm{tr}(XY)$. The CSAs $\fh$ and $\fh'$ provided by two 
orthonormal bases $u_1,\ldots,u_m,v_1,\ldots,v_m$ are thus orthogonal if and only if
\[\textstyle
 |\langle u_k,v_\ell\rangle|^2=\frac{1}{m}
\]
holds for all $k,\ell$. In this case, the two bases are called \emph{mutually unbiased}.
Such bases were considered in quantum mechanics by J. Schwinger \cite{Schw}.
The construction of mutually unbiased bases has interesting connections to finite geometry,
cp.~\cite{Kan1}, \cite{Kan2}, \cite{KThas1}, \cite{KThas2}.
It is an open problem in which dimensions $m$ there exist $m+1$ pairwise
mutually unbiased orthonormal bases. 
They are known to exist if $m$ is a prime power \cite{WF}, \cite{KR}. 
As we have seen, this question is equivalent
to the existence of an orthogonal decomposition of $\mathfrak{su}(m)$ into CSAs.
There is a related problem about MASAs in operator theory, cp.~\cite{Hag}.
It is presently an open problem if $\mathfrak{su}(6)$
admits an orthogonal decomposition into $7$ CSAs.

\subsection*{Acknowledgments}
I thank Christoph B\"ohm, Theo Grundh\"ofer, Karl Heinrich Hofmann and Karl-Hermann Neeb for helpful remarks.

\end{document}